\documentclass[12pt]{amsart}

\usepackage[utf8]{inputenc}
\usepackage{amsmath,amssymb,bbm,enumitem}
\usepackage{hyperref}
\usepackage{amsthm}
\usepackage{listings}[2013/08/05]
\usepackage[capitalize]{cleveref}
\usepackage[square,numbers,sort]{natbib}
\usepackage{xparse}

\hypersetup{pdfstartview=}

\def\clap#1{\hbox to 0pt{\hss#1\hss}}


\newcommand{\rhup}{\rightharpoonup}


\def\B{{\mathcal B}}
\def\D{{\mathcal D}}

\def\P{{\mathcal P}}

\def\p{{\mathfrak p}}

\def\k{{\mathbbm k}}


\def\ot{\otimes}
\newcommand\inv{^{-1}}
\def\iff{\Leftrightarrow}

\newcommand{\du}[1]{\mathbbm{k}^{#1}}
\newcommand{\duc}[1]{\mathbbm{k}^{#1\operatorname{co}}}
\newcommand\rcofix[2]{{#1}{^{\operatorname{co} #2}}}

\newcommand\comm\curlyvee
\newcommand\cocomm\curlywedge

\newcommand{\com}[1]{_{(#1)}}

\newcommand{\ev}[1]{\operatorname{ev}_{#1}}
\newcommand{\BCh}[1]{\Hom(#1,\widehat{#1})}


\DeclareMathOperator{\image}{Im}  
\DeclareMathOperator{\Img}{\image}
\DeclareMathOperator{\Hom}{Hom}   
\DeclareMathOperator{\id}{id}     
 %
\DeclareMathOperator{\Aut}{Aut}   
\DeclareMathOperator{\End}{End}   
\DeclareMathOperator{\Rep}{Rep}		
\DeclareMathOperator{\co}{co}     
\DeclareMathOperator{\op}{op}			

\theoremstyle{plain}
\newtheorem{thm}{Theorem}[section]

\newtheorem{cor}[thm]{Corollary}
\newtheorem{prop}[thm]{Proposition}
\newtheorem{lem}[thm]{Lemma}

\theoremstyle{definition}
\newtheorem{df}[thm]{Definition}
\newtheorem{example}[thm]{Example}

\theoremstyle{remark}
\newtheorem{rem}[thm]{Remark}

\crefname{lem}{Lemma}{Lemmas}
\crefname{thm}{Theorem}{Theorems}
\crefname{cor}{Corollary}{Corollaries}
\crefname{prop}{Proposition}{Propositions}
\crefname{example}{example}{examples}
\crefname{df}{Definition}{Definitions}
\crefname{equation}{equation}{equations}

\numberwithin{equation}{section}

\DeclareDocumentCommand{\morph}{ O{u} O{r} O{p} O{v} }{\begin{pmatrix} #1 & #2 \\ #3 & #4 \end{pmatrix}}
\DeclareDocumentCommand{\cmorph}{ O{u} O{r} O{p} O{v} }{(#1,#2,#3,#4)}

\newcommand{\DDG}{\D(G)^{*\co}}
\DeclareDocumentCommand{\twocycle}{ O{u} O{r} O{p} O{v} O{G}}{\sum_{g,h,s,t\in #5} #2^*(h)e_s\# gt \ot e_h #1(e_g)\# #4(s)#3(e_t)}

\title[Quasitriangular structures for group doubles]{Quasitriangular structures of the double of a finite group}
\author{Marc Keilberg}
\thanks{A portion of this research was conducted during the author's stay at the Institut de Math\'{e}matiques de Bourgogne, Universit\'{e} de Bourgogne in Dijon, France, and was partially supported through a FABER Grant by the Conseil r\'{e}gional de Bourgogne.}
\email{keilberg@usc.edu}

\begin{document}
\begin{abstract}
We give a classification of all quasitriangular structures and ribbon elements of $\D(G)$ explicitly in terms of group homomorphisms and central subgroups.  This can equivalently be interpreted as an explicit description of all braidings with which the tensor category $\Rep(\D(G))$ can be endowed.  We also characterize their equivalence classes under the action of $\Aut(\D(G))$ and determine when they are factorizable.
\end{abstract}
\keywords{quasitriangular structures, group doubles, braidings, ribbon elements, factorizability}
\maketitle
\section*{Introduction}\label{sec:intro}
Quasitriangular (quasi-)bialgebras were first introduced by \citet{Dri:ACoCHA,Dri:QG,Drin:SQTqHA} as a way of producing solutions to the quantum Yang-Baxter equation.  These have applications in statistical mechanics, where they yield exactly solvable lattice models \citep{Jim:YBEIS}, as well as in quantum computing, where they can be characterized as universal quantum gates \citep{ZhaKau:YBUQG,KauLom:BOUQG}.  A number of knot and link invariants can also be constructed from such objects \citep{Res:QTHALinks,Kau:IL3MHA}. When we have the additional structure of a factorizable ribbon Hopf algebra, we also obtain projective representations of mapping
class groups of compact oriented surfaces of arbitrary genus with a finite (possible empty) collection of marked boundary circles \citep{Ly:Ribbon}.

From a categorical point of view, if $H$ is a quasitriangular (quasi-)Hopf algebra over the field of complex numbers then $\Rep(H)$, the category of finite-dimensional representations of $H$, is a braided tensor category.  In particular, the braidings of $\Rep(H)$ are precisely given by the quasitriangular structures of $H$ \citep[Theorem 10.4.2]{Mon:HAAR}.  The braided fusion categories $\Rep(\D(G))$, the finite dimensional representations of the Drinfel'd double of a group, and more generally $\Rep(\D^\omega(G))$ \citep{DPR}, have been of substantial recent interest in their own right.  See \citep{NegNg16,NaNi,GMN,Ma1,Na07,Ost:MCDG} and the references therein.

In general a Hopf algebra can have many quasitriangular structures, and there is a characterization of them in terms of certain Hopf algebra morphisms due to \citet{Rad:MQHA}.  A number of equivalent characterizations of quasitriangular structures, also known as universal $R$-matrices, have been provided for various types of Hopf algebras \citep{Jiao:QTSwSP,WanZhan:CQTSTSCP,ZhaShuJia:QTSBHA}. In this paper we will provide a complete and explicit description of the quasitriangular structures of $\D(G)$ over an arbitrary field in terms of central subgroups and group homomorphisms.

Investigating the impact of changing the braiding of $\Rep(H)$ for a semisimple ribbon Hopf algebra $H$, or more generally for any braided fusion category, is expected to provide additional insights into the category. For example, one of the most important invariants for a spherical category are the Frobenius-Schur indicators \citep{NS10,NegNg16,BCNPR17}. On the one hand the higher indicators can be computed using only that $H$ is semisimple \citep{KSZ2}, and on the other they can expressed in a number of categorical ways, especially when the category is modular \citep{NS07a}. Thus it is natural to ask when a given quasitriangular structure yields a modular category; if inequivalent modular categories can be obtained; and what new data can be obtained about the indicators by comparing the categorical calculations when the braiding is changed.  Furthermore, the modular data is connected to many other invariants, such as the fusion rules via Verlinde's formula~\citep{BakKir:book}, and we can similarly question what new insights we obtain about these invariants. Answering some of these questions for $\Rep(\D(G))$ is itself a detailed enterprise, however. We will subsequently focus our attention on the group theoretical and classical Hopf algebra questions for this paper. The author intends to address several of these questions for $\Rep(\D(G))$ in a future paper.

We note that some of the results in this paper could be stated in greater generality than given. In particular, \cref{lem:ABM3.1} is just a specific instance of the general ansatz of splitting a given algebraic object into distinct pieces (not necessarily with the same structure) and using this decomposition to analyze the original object.  The lemma in particular can be used to give various generalizations of \cref{thm:morphisms} and \citep[Theorem 3.2]{ABM} to morphisms between various combinations of bismash (co)product Hopf algebras.  This in turn becomes a description of weak $R$-matrices and quasitriangular structures on bismash products in much the same fashion.  However, doing so in such generality results in lengthy lists of equations for which the author is unable to find meaningful structure or simplifications.  As such we have opted to restrict focus to the doubles of groups, where the equations take on a reasonably straightforward description in group theoretical terms.

The paper is structured as follows. In \cref{sec:prelim} we introduce the relevant notation and background. In \cref{sec:weak-Rs} we characterize all Hopf algebra morphisms $\DDG\to\D(G)$. As in \citep{MR1624475}, we call these the weak $R$-matrices.  In \cref{sec:group-com,sec:dual-com} we compute the equivalent conditions for a weak $R$-matrix to have the appropriate commutation relationship with the comultiplication.  In \cref{sec:qts} we combine these results to describe all quasitriangular structures of $\D(G)$.  We then show that a ribbon element exists for each quasitriangular structure and explicitly describe them in \cref{sec:ribbon}. \Cref{sec:equivalence} investigates the equivalence of quasitriangular structures under $\Aut(\D(G))$. Finally, \cref{sec:factor} determines when an arbitrary quasitriangular structure is factorizable.

\section{Preliminaries}\label{sec:prelim}
Our reference for the general theory of Hopf algebras will be \citep{Mon:HAAR}.

Let $H$ be a Hopf algebra over a field $\k$.  Suppose that $R\in H\ot H$ satisfies the following relations:
\begin{align}
	(\Delta\ot\id)(R)&=R^{13}R^{23}\label{eq:qt1};\\
	(\id\ot\Delta)(R)&=R^{13}R^{12}\label{eq:qt2};\\
	(\varepsilon\ot\id)(R)&=1;\label{eq:qt3}\\
	(\id\ot\varepsilon)(R)&=1,\label{eq:qt4}
\end{align}
where, writing $R=\sum R^{(1)}\ot R^{(2)}$ we have $R^{13}=R^{(1)}\ot1\ot R^{(2)}$, and similarly for $R^{23}$ and $R^{12}$.  Any such element is invertible, with $R^{-1}=S\ot\id(R)$. Such an element is called a weak $R$-matrix on $H$ \citep{MR1624475}.

Furthermore, if a weak $R$-matrix $R$ satisfies \[h\com 2\ot h\com1 = R(h\com1\ot h\com2)R^{-1}\] for all $h\in H$ then $R$ is said to be a quasitriangular structure, or (universal) $R$-matrix, of $H$.  If such an element exists, we say that $H$ is quasitriangular, and denote the pair by $(H,R)$, or simply $H$ when the structure is understood from the context.
\begin{df}\label{df:equiv-QTS}
Two quasitriangular Hopf algebras $(H,R)$, $(K,R')$ are said to be isomorphic as quasitriangular Hopf algebras if there is a Hopf algebra isomorphism $X\colon H\to K$ such that $X\ot X(R)=R'$.  Given two quasitriangular structures $R,R'$ on $H$, we say that $R$ and $R'$ are equivalent, denoted $R\sim R'$, if $(H,R)$ and $(H,R')$ are isomorphic as quasitriangular Hopf algebras.
\end{df}

As noted by \citet{Rad:QSHA} there is a $\mathbbm{k}$-linear injection $F\colon H\ot H\to \Hom_{\mathbbm{k}}(H^*,H)$ given by $F(a\ot b)(p)=p(a)b$.  In the subsequent, $F$ will always refer to this injection.  When $H$ is finite dimensional then $R\in H\ot H$ satisfies equations \eqref{eq:qt1} to \eqref{eq:qt4} if and only if $F(R)$ is a morphism of Hopf algebras $\rcofix{H^*}{}\to H$.  Indeed, there are always finitely many such $R$ under mild assumptions on $H$ and $\k$.  On the other hand, given a morphism of Hopf algebras $\psi\colon \rcofix{H^*}{}\to H$ and any basis $\B$ of $H$ then
\begin{align}\label{eq:morph-to-weak}
	R=\sum_{h\in \B} h\ot\psi(h^*),
\end{align}
where $h^*$ is the element dual to $h$, satisfies $F(R)=\psi$.  Thus $R$ is a weak $R$-matrix, and by injectivity of $F$ it is independent of the choice of basis of $H$.

Let $K,L$ also be Hopf algebras over $\k$.  Given linear maps $f\colon H\to K$ and $g\colon L\to K$, by $f\comm g$ we mean that the images of $f$ and $g$ commute elementwise, and say that $f$ and $g$ commute.  Dually, for $f\colon H\to K$ and $g\colon H\to L$, by $f\cocomm g$ we mean that the morphisms cocommute: $f(a\com 1)\ot g(a\com 2) = f(a\com 2)\ot g(a\com 1)$ for all $a\in H$.  Throughout $\tau$ denotes the map $H\ot K\to K\ot H$ given by $\tau(h\ot k)=k\ot h$.

Any linear map $f\colon H\to K$ will be called unitary if $f(1_H)=1_K$, and counitary if $\varepsilon_K\circ f = f\circ\varepsilon_H$.  We say that $f$ is biunitary if it is both unitary and counitary.  All algebra morphisms are unitary, and all coalgebra maps are counitary, so we will not specify unitary or counitary in these cases.  A counitary algebra morphism is also called a morphism of augmented algebras.  All morphisms and spaces of morphisms will be of Hopf algebras or groups as appropriate, unless otherwise specified.

For a finite group $G$ we let $\k G$ be its group algebra over $\k$ and $\du{G}$ be the dual Hopf algebra.  The group of 1-dimensional $\k$-linear characters of $G$ is denoted by $\widehat{G}$, and is identified with the group-likes of $\du{G}$.  We denote the left conjugation actions of $G$ on $\k G$ and $\du{G}$ both by $\rhup$.  For $g,h\in G$ we let $g^h=h\inv g h$ and $[g,h]=g\inv h\inv g h$.  Note that $G^{\operatorname{op}}\cong G$ via the inversion map.  We say that $G$ is purely non-abelian if it has no non-trivial abelian direct factors.  A special case of such groups are the stem groups, which are those $G$ satisfying $Z(G)\subseteq G'$.

We now describe $\D(G)$, the Drinfel'd double of $G$ over $\k$.  As a coalgebra this is $\duc{G}\ot\k G$.  Denoting elements of $\D(G)$ by $f\bowtie g$, $f\in\duc{G}$, $g\in G$, the algebra structure is given by the semidirect product formula
\[ (f\bowtie g)\cdot (f'\bowtie g') = f (g\rhup f')\bowtie gg'.\]
Similarly, $\DDG$ is $\du{G}\ot \k G^{\op}$ as an algebra.  Denoting elements of $\DDG$ by $f\# g$, $f\in\du{G},g\in G^{\op}$, the coalgebra structure is given by
\[\Delta(e_x\# g) = \sum_{s\in G} e_s\# g \ot e_{xs^{-1}}\# s^{-1}gs.\]

Note that the conjugate $s^{-1}gs$ is computed in $G^{\op}$.  In particular we see that $\duc{G}$ is a Hopf subalgebra of $\DDG$, whereas $G^{\op}$ is only an augmented subalgebra since $\Delta(\varepsilon\# g) = \sum_{s\in G}e_s\# g \ot \varepsilon\# s^{-1}gs$.  For more details on these Hopf algebras, we refer the reader to \citep{DPR,Mon:HAAR,BGM}.

\begin{example}
When $H=\D(G)$ is the Drinfel'd double of a finite group, the standard quasitriangular structure is
\begin{align}\label{eq:dg-std-qt}
	R_0 = \sum_{g\in G}\varepsilon\# g\ot e_g\# 1.
\end{align}
We also have the following quasitriangular structure, which is sometimes used instead of $R_0$ depending on the choice of notation:
\begin{align}
  R_1=\tau(R_0\inv)=\sum_{g\in G}e_g\# 1\ot \varepsilon\# g\inv.
\end{align}
\end{example}

\section{Classifying weak \texorpdfstring{$R$}{R}-matrices for \texorpdfstring{$\D(G)$}{D(G)}}\label{sec:weak-Rs}
We wish to give a useful description of $\Hom(\DDG,\D(G))$, which then gives us a complete description of the weak $R$-matrices by \eqref{eq:morph-to-weak}.  This will involve a number of computations, so we state the result now and proceed to prove it in stages.  The idea is similar to that used by \citet{ABM} to classify the morphisms between bismash products of Hopf algebras.  $\D(G)$ is of this form, but $\DDG$ is a smash coproduct, so we must develop an appropriate version for the present context.

\begin{thm}\label{thm:morphisms}
	The morphisms $\psi\in\Hom(\DDG,\D(G))$ are in bijective correspondence with the quadruples $\cmorph$ where
	\begin{enumerate}
		\item $u\colon\duc{G}\to\duc{G}$ is a unitary morphism of coalgebras;
		\item $r\colon \k G^{\op}\to \duc{G}$ is a biunitary linear map;
		\item $p\colon \duc{G}\to \k G$ is a morphism of Hopf algebras;
		\item $v\colon \k G^{\op}\to \k G$ is a morphism of augmented algebras;
	\end{enumerate}
	satisfying all of the following, for all $a,b\in\duc{G}$ and $g,h\in G^{\op}$:
	\begin{align}
				p&\cocomm u;\\
				p&\comm v;\\
				u(ab)&=u(a\com 1)(p(a\com2)\rhup u(b));\label{compat:u-alg}\\
				r(gh)&=\sum_{s\in G}r(s\inv gs)(p(e_s)v(g)\rhup r(h));\label{compat:r-alg}\\
				\Delta(v(g))&= \sum_{s\in G}p(e_s)v(g)\ot v(s^{-1}gs);\label{compat:v-coalg}\\
				\Delta(r(g))&= \sum_{b,s\in G}u(e_s) \left(p(e_{bs^{-1}})\rhup r(g)\right) \ot r(b^{-1}gb); \label{compat:r-coalg}\\
				u(a\com 1)\left( p(a\com 2)\rhup r(g)\right) &= \sum_{s\in G}r(s\inv g s)\left( \left(p(e_s)v(g)\right)\rhup u(a) \right). \label{compat:ab-commute}
	\end{align}
	For such a quadruple, the morphism is given by
	\begin{align}\label{eq:morph-def}
		e_x\# g \mapsto \sum_{\substack{a,b,c\in G\\abc=x}} u(e_c)(p(e_b)\rhup r(a\inv g a))\bowtie p(e_a)v(g).
	\end{align}
	
	On the other hand, given any linear map $\psi\colon \D(G)^{*\operatorname{co}}\to \D(G)$, we define the components $u,r,p,v$ by defining
	\begin{align}
		u(a) &= \id\ot\varepsilon(\psi(a\#1));\label{abm:u-def}\\
		r(g) &= \id\ot\varepsilon(\psi(\varepsilon\# g));\label{abm:r-def}\\
		p(a) &= \ev1\ot\id(\psi(a\# 1));\label{abm:p-def}\\
		v(g) &= \ev1\ot\id(\psi(\varepsilon\# g)).\label{abm:v-def}
	\end{align}
\end{thm}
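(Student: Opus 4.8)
The plan is to realize Theorem \ref{thm:morphisms} as an instance of the splitting principle of Lemma \ref{lem:ABM2.1}, applied to the factorization of the source $\DDG$ into the Hopf subalgebra $\duc{G}$ (embedded as $a\mapsto a\#1$) and the augmented subalgebra $\k G^{\op}$ (embedded as $g\mapsto\varepsilon\# g$), together with the factorization of the target $\D(G)=\duc{G}\bowtie\k G$ through the coalgebra retraction $\id\ot\varepsilon\colon\D(G)\to\duc{G}$ and the Hopf quotient $\ev1\ot\id\colon\D(G)\to\k G$. Since $\DDG$ is the tensor product algebra $\du{G}\ot\k G^{\op}$, every generator factors as $e_x\# g=(e_x\#1)(\varepsilon\# g)$, so any morphism $\psi$ is determined by its restrictions to the two subalgebras; composing these with the two projections yields exactly the four components $u,r,p,v$ of \eqref{abm:u-def}--\eqref{abm:v-def}.

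First I would pin down the types (i)--(iv). Because $\duc{G}$ is a Hopf subalgebra, $\psi|_{\duc{G}}$ is a morphism of Hopf algebras; postcomposing with the coalgebra map $\id\ot\varepsilon$ makes $u$ a unitary coalgebra morphism, while postcomposing with the Hopf map $\ev1\ot\id$ makes $p$ a Hopf morphism, giving (i) and (iii). Dually $\psi|_{\k G^{\op}}$ is a morphism of augmented algebras, so $v=(\ev1\ot\id)\circ\psi|_{\k G^{\op}}$ is again a morphism of augmented algebras, giving (iv), whereas $r=(\id\ot\varepsilon)\circ\psi|_{\k G^{\op}}$ is only biunitary and linear, since $\id\ot\varepsilon$ fails to be multiplicative, giving (ii).

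Next I would extract the seven compatibilities by pushing everything through the two projections. Using that $\D(G)$ is a tensor product coalgebra one has $\psi(a\#1)=\sum u(a\com1)\bowtie p(a\com2)$; feeding this into the multiplicativity $\psi(ab\#1)=\psi(a\#1)\psi(b\#1)$ and the comultiplicativity on $\duc{G}$, and then projecting, produces \eqref{compat:u-alg}, the multiplicativity of $p$, and $p\cocomm u$. The relation for $r(gh)$ in \eqref{compat:r-alg} and the relation \eqref{compat:v-coalg} come from multiplicativity and comultiplicativity on the elements $\varepsilon\# g$, where the smash coproduct $\Delta(\varepsilon\# g)=\sum_s e_s\# g\ot\varepsilon\# s^{-1}gs$ must be used; the fact that $a\#1$ and $\varepsilon\# g$ commute in the tensor product algebra $\DDG$ yields $p\comm v$ together with the mixed relation \eqref{compat:ab-commute}. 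For the converse I would take a quadruple satisfying all seven equations, define $\psi$ by the explicit formula \eqref{eq:morph-def}, which is exactly $\psi(e_x\#1)\psi(\varepsilon\# g)$ computed in the smash product $\D(G)$, the conjugation action $\rhup$ generating the internal factors $p(e_b)\rhup r(a^{-1}ga)$ and the conjugate $a^{-1}ga$, and then verify directly that it is both an algebra and a coalgebra morphism with components $(u,r,p,v)$, so that the two assignments are mutually inverse.

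The main obstacle I anticipate is the coalgebra compatibility carried by $r$, namely \eqref{compat:r-coalg}, and to a lesser extent \eqref{compat:v-coalg}. This is precisely the point where $\DDG$ departs from the bismash products treated in \citep{ABM}: $\k G^{\op}$ is only an augmented subalgebra, not a Hopf subalgebra, so $\Delta(\varepsilon\# g)$ is genuinely twisted and its image under $\psi\ot\psi$ does not split cleanly along the two subalgebras. Carefully applying both projections to $(\psi\ot\psi)\Delta(\varepsilon\# g)$ and reassembling via the tensor coalgebra identity $X=\sum(\id\ot\varepsilon)(X\com1)\bowtie(\ev1\ot\id)(X\com2)$ is what forces the double sum over $b,s$ in \eqref{compat:r-coalg}; keeping this bookkeeping consistent while simultaneously matching the commutation relation \eqref{compat:ab-commute} is the delicate part of the argument.
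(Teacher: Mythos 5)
Your proposal is correct and is essentially the paper's own proof: both split $\DDG$ as an algebra and $\D(G)$ as a coalgebra via Lemma \ref{lem:ABM2.1}, read off the four components through the projections $\id\ot\varepsilon$ and $\ev1\ot\id$, derive the compatibility conditions by projecting the algebra/coalgebra morphism identities (with the twisted coproduct of $\varepsilon\# g$ responsible for \eqref{compat:r-alg}, \eqref{compat:v-coalg}, \eqref{compat:r-coalg}), and treat the converse as a direct verification. The one point to make explicit is that recovering the exact form \eqref{eq:morph-def} requires the cocommutation of the pair $\bigl((\id\ot\varepsilon)\psi,\,(\ev1\ot\id)\psi\bigr)$ furnished by part (i) of the lemma, which lets you swap legs and write $\psi(\varepsilon\# g)=\sum_{s\in G}r(s\inv gs)\bowtie p(e_s)v(g)$; your reassembly identity alone only expresses $\psi(\varepsilon\# g)$ through the values $(\id\ot\varepsilon)\psi(e_s\# g)$, which is not yet a formula in the four components.
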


We use the notation of the theorem throughout the rest of the paper without further mention.  In particular we implicitly identify a morphism with its quadruple, adding indices or superscripts to the components to identify the particular morphism as necessary.  We will denote trivial morphisms by 0 and identity morphisms by 1.

It is easy to see, as in \citep[Theorem 2.1]{K14}, that the component $p$ is uniquely determined by a Hopf isomorphism $\du{A}\to \k B$, where $A,B$ are abelian subgroups of $G$. Subsequently we have isomorphisms $\widehat{A}\cong A\cong B$.  Whenever we mention $A,B$ in the subsequent we are referring to these subgroups.

We now proceed to prove the theorem.  We will show how to obtain the desired quadruple of maps and compatibility conditions from $\psi\in\Hom(\DDG,\D(G))$.  The reverse direction is then a simple check.  We need the following lemma, a proof of which can be found in \citep[Lemma 3.1]{ABM}.

\begin{lem}\label{lem:ABM3.1}
	Let $C,D,E$ be coalgebras and $H,K,L$ algebras.
	\begin{enumerate}
		\item There is a bijection between coalgebra morphisms $\psi\colon C\to D\ot E$ and pairs $(\gamma,\delta)$ where $\gamma\colon C\to D$ and $\delta\colon D\to E$ are cocommuting morphisms of coalgebras.  In particular, $\psi(c)=\gamma(c\com 1)\ot \delta(c\com 2)$, $u=(\id\ot\varepsilon)\psi$, and $p=(\varepsilon\ot\id)\psi$.
		\item There is a bijection between algebra morphisms $\phi\colon H\ot K\to L$ and pairs $(\alpha,\beta)$ where $\alpha\colon H\to L$ and $\beta\colon K\to L$ are commuting morphisms of algebras.  In particular, $\phi(h\ot k)=\alpha(h)\beta(k)$, $\alpha(h)=\phi(h\ot 1)$, and $\beta(k)=\phi(1\ot k)$.
	\end{enumerate}
\end{lem}

So suppose we are given $\psi\in\Hom(\DDG,\D(G))$.  We have that $\D(G)$ is a tensor product as a coalgebra, and subsequently $\DDG$ is a tensor product as an algebra.  Thus both parts of the lemma apply, and we may write $\psi(f\bowtie g) = \alpha(f)\beta(g) = \gamma((f\bowtie g)\com1)\ot \delta((f\bowtie g)\com2)$.  Furthermore, it is easily seen that $\alpha,\beta$ preserve the counit and that $\gamma,\delta$ preserve the unit.

In addition $\ev1\ot\id\colon\D(G)\to\k G$ is a morphism of Hopf algebras, whence we conclude that $\delta$ is in fact a morphism of Hopf algebras.  Applying the lemma again, we may write $\delta(f\bowtie g) = p(f)v(g)$ for $p\colon\duc{G}\to\k G$ a morphism of Hopf algebras and $v\colon\k G^{\op}\to\k G$ a morphism of augmented algebras satisfying $p\comm v$.  Since $\duc{G}$ is a Hopf subalgebra of $\DDG$ we also have that $\alpha$ is a morphism of Hopf algebras.  Therefore $\alpha(f) = u(f\com 1)\# p'(f\com 2)$ for $u\colon\duc{G}\to\duc{G}$ a morphism of unitary coalgebras and $p'\colon \duc{G}\to\k G$ a morphism of Hopf algebras satisfying $p'\cocomm u$.  Indeed \[\ev1\ot\id\left( \psi(f\bowtie 1)\right)=p(f)=p'(f)\] for all $f\in\duc{G}$.  We define $r(g)=\gamma(\varepsilon\# g)$ for all $g\in G^{\op}$.  This yields the quadruple $\cmorph$ in the theorem.  We now need to prove that the indicated compatibility relations hold, and that $\psi$ has the indicated form.

We first show that we can write $\beta$ in terms of $r,p,v$.  Since $\gamma\cocomm\delta$ we have
\begin{align*}
	\beta(g) &= \gamma\ot\delta(\Delta(\varepsilon\ot g))\\
	&= \gamma\ot\delta\left(\sum_{s\in G} \varepsilon\# s^{-1}gs\ot e_s\# g\right)\\
	&= \sum_{s\in G}r(s^{-1}gs)\# p(e_s)v(g).
\end{align*}
Subsequently we have \[\gamma(f\# g) = \id\ot\varepsilon(\alpha(f)\beta(g)) = u(f\com 1)(p(f\com 2)\rhup r(g)).\]  By then computing $\psi(f\# g)=\alpha(f)\beta(g)$ we find that \eqref{eq:morph-def} holds.

To get \eqref{compat:u-alg} we first observe that
\begin{align*}
	\alpha(f\cdot h) &= u(f\com 1 h\com 1)\bowtie p(f\com 2 h\com 2)\\
	&=\alpha(f)\alpha(h)\\
	&=u(f\com 1)\left(p(f\com 3)\rhup u(h\com 1)\right)\bowtie p(f\com 2)p(h\com 1).
\end{align*}
The desired relation then follows by applying $\id\ot\varepsilon$.

Similarly we have
\begin{align*}
	\beta(gh) &= \sum_{s\in G}r(s\inv ghs)\bowtie p(e_s)v(g)\\
	&= \beta(g)\beta(h)\\
	&= \sum_{s,t,x\in G}r(s\inv gs) \left( \left(p(e_x)v(g)\com 2\right)\rhup r( t\inv h t)\right)\\
    &\qquad{}\bowtie p(e_{sx\inv})v(g)\com1p(e_t)v(h).
\end{align*}
Applying $\id\ot\varepsilon$ we find that \eqref{compat:r-alg} holds.

We can also easily compute that
\begin{align*}
	\Delta\delta(\varepsilon\# g) &= \Delta(p(\varepsilon))\Delta(v(g))\\
	&=\Delta(v(g))\\
	&=\sum_{s\in G} p(e_s)v(g)\ot v(s\inv g s),
\end{align*}
which is \eqref{compat:v-coalg}.  By computing $\Delta\beta(g)$ in two different ways we similarly find that \eqref{compat:r-coalg} holds.

In order for $\alpha\comm \beta$ to hold we see that for all $f\in\duc{G},g\in G^{\op}$
\begin{align*}
	\sum_{s\in G}u(f\com 1)\left( p(f\com 3)\rhup r(s\inv g s)\right) \bowtie p(f\com 2 e_s)v(g)
\end{align*}
must be equal to
\begin{align*}
	\sum_{s,t\in G} r(s\inv g s)\left( p(e_t)v(g)\com1\rhup u(f\com1)\right)\bowtie p(e_{st\inv})v(g)\com2 p(f\com2).
\end{align*}
Applying $\id\ot\varepsilon$ to both expressions we find that \eqref{compat:ab-commute} holds.

This completes the proof.

By the bijective correspondence between the weak $R$-matrices and $\Hom(\DDG,\D(G))$ we have the following description of the weak $R$-matrices.

\begin{thm}
	Given $\cmorph{}\in\Hom(\DDG,\D(G))$ then
	\begin{align}\label{eq:R-def}
		R &= \sum_{a,b,c,s\in G} e_s\bowtie abc \ot u(e_c)\left( p(e_b)\rhup r(a\rhup s)\right)\bowtie p(e_a)v(s)
	\end{align}
	is a weak $R$-matrix with $F(R)=\cmorph$.
\end{thm}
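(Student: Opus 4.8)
The plan is to feed the given quadruple $\cmorph$ into the general recipe \eqref{eq:morph-to-weak} using the standard basis of $\D(G)$, and then to collapse the resulting sum into the stated closed form. Recall from the discussion around \eqref{eq:morph-to-weak} that for any Hopf morphism $\psi\colon\DDG\to\D(G)$ and any basis $\B$ of $\D(G)$, the element $R=\sum_{h\in\B}h\ot\psi(h^*)$ automatically satisfies $F(R)=\psi$, hence is a weak $R$-matrix, and is moreover independent of $\B$. Consequently both assertions of the theorem follow at once provided I can show that, for one convenient basis, the expression $\sum_{h\in\B}h\ot\psi(h^*)$ equals the right-hand side of \eqref{eq:R-def}.

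First I would take the standard basis $\B=\{e_x\bowtie g : x,g\in G\}$ of $\D(G)=\duc{G}\ot\k G$. The key preliminary step is to identify the dual basis vector $(e_x\bowtie g)^*$ as an element of $\DDG$. Since $\DDG$ shares its underlying vector space with $\D(G)^*$, and under the natural pairing $\langle e_y\# h, e_z\bowtie g'\rangle=\delta_{y,g'}\,\delta_{h,z}$ the factor $\du{G}$ is dual to $\k G$ while $\k G^{\op}$ is dual to $\duc{G}$, one finds $(e_x\bowtie g)^*=e_g\# x$.

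With this identification I would substitute into \eqref{eq:morph-def} to get $\psi(e_g\# x)=\sum_{abc=g}u(e_c)\bigl(p(e_b)\rhup r(a^{-1}xa)\bigr)\bowtie p(e_a)v(x)$, whence $R=\sum_{x,g}(e_x\bowtie g)\ot\psi(e_g\# x)$. It then remains to reindex: merging the outer sum over $g$ with the inner constraint $abc=g$ turns it into a free sum over $a,b,c$ with $g$ replaced by $abc$, and renaming $x$ to $s$ produces exactly \eqref{eq:R-def}, once one reads the $G^{\op}$-conjugate $a^{-1}xa$ as $a\rhup s$ in the notation of the theorem.

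The computation is otherwise routine, and the main obstacle is purely a matter of convention bookkeeping. Concretely, one must pin down the dual basis as $(e_x\bowtie g)^*=e_g\# x$ with no stray inversion sneaking in through the identification $G^{\op}\cong G$, and one must track whether the conjugation inside $r$ is formed in $G$ or in $G^{\op}$ so that $a^{-1}xa$ genuinely matches the $\rhup$ appearing in \eqref{eq:R-def}. Aligning these two conventions is precisely what makes the sum collapse onto the stated formula rather than a variant with inverted or transposed indices.
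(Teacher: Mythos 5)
Your proposal is correct and takes essentially the same route as the paper, which states this theorem as an immediate consequence of the correspondence \eqref{eq:morph-to-weak} applied to the morphism \eqref{eq:morph-def} without writing out the computation. Your identification of the dual basis as $(e_x\bowtie g)^* = e_g\# x$ and your reading of the $G^{\op}$-conjugate $a^{-1}sa$ as $a\rhup s$ are exactly the convention checks needed to make that implicit verification explicit, and both are consistent with the paper's conventions (as one can confirm against Example \ref{ex:standard-morph}).
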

\begin{rem}
	Expressing the comultiplication of $\DDG$ with the more general form of a semidirect coproduct would permit one to write $R$ in terms of arbitrary bases for $\duc{G}$ and $\k G$.  This does not provide a meaningful benefit in the subsequent, so we choose to express $R$ in the standard bases.
\end{rem}

\begin{example}\label{ex:standard-morph}
	The standard quasitriangular structure $R_0$ of $\D(G)$ in \eqref{eq:dg-std-qt} corresponds to the morphism $\cmorph[1][0][0][0]$.
\end{example}
\begin{example}
	For any morphism of Hopf algebras $u\colon\duc{G}\to\duc{G}$, $\cmorph[u][0][0][0]$ corresponds to the weak $R$-matrix \[R_u=\sum_{g\in G}\varepsilon\bowtie g\ot u(e_g)\bowtie 1.\]  When $u=\id$ we get the standard $R$-matrix.  Note that $\tau(R_u\inv)=(0,0,0,Su^*)$.
\end{example}
\begin{example}
	For any group homomorphism $r\colon G^{\op}\to\widehat{G}$, $\cmorph[0][r][0][0]$ gives the weak $R$-matrix \[R_r=\sum_{g\in G} e_g\bowtie 1\ot r(g)\bowtie 1.\]
\end{example}
\begin{example}
  For any Hopf algebra morphism $p\colon\duc{G}\to\k G$, $\cmorph[0][0][p][0]$ gives the weak $R$-matrix $R_p =\sum_{t\in G}\varepsilon\bowtie t\ot \varepsilon\bowtie p(e_t)$.
\end{example}
\begin{example}\label{ex:v-2cycle}
  For any $v\in\End(G)$, $\cmorph[0][0][0][Sv]$ gives the weak $R$-matrix \[R_v=\sum_{s\in G}e_s\bowtie 1\ot \varepsilon\bowtie v(s\inv).\]  Note that $\tau(R_v\inv)=\cmorph[v^*][0][0][0]$.  In particular for $v=\id$ we have $R_v=R_1=\tau(R_0\inv)$.
\end{example}

\section{Weak \texorpdfstring{$R$}{R}-matrices commuting with \texorpdfstring{$G$}{G}}\label{sec:group-com}
We wish to determine those weak $R$-matrices which commute with the coproduct, which we will call the central weak $R$-matrices, as well as the quasitriangular structures.  To determine the central weak $R$-matrices and quasitriangular structures of $\D(G)$ explicitly we need to check the equalities
\begin{equation}\label{eq:lazy}
	R\Delta(f\bowtie x) = \Delta(f\bowtie x)R
\end{equation}
and
\begin{equation}\label{eq:qt}
	R \Delta(f\bowtie x) = \tau\circ\Delta(f\bowtie x) R
\end{equation}
respectively.

It suffices to check these identities for elements of the form $\varepsilon\bowtie x$ and $f\bowtie 1$.  In this section we consider the former, and in the next section we consider the latter.

To this end we compute
\begin{align}\label{eq:Rx-prod}
	R\Delta(\varepsilon\bowtie x)&=\sum_{a,b,c,s\in G} e_s\bowtie abcx\\
    &\qquad{}\ot \left(p(e_b)\rhup r(a\rhup s)\right) u(e_c)\bowtie p(e_a)v(s)x\nonumber
\end{align}
and
\begin{align}\label{eq:xR-prod}
	(\Delta(\varepsilon\bowtie x))R&=\sum_{a,b,c,s\in G} x\rhup e_s\bowtie xabc\\
	&\qquad{}\qquad{}\ot x\rhup\left( (p(e_b)\rhup r(a\rhup s))u(e_c)\right)\nonumber\\
    &\qquad{}\qquad{} \bowtie x p(e_a)v(s).\nonumber
\end{align}

Applying $\ev1\ot\id\ot\ev1\ot\id$ to \eqref{eq:Rx-prod} and \eqref{eq:xR-prod} and equating we find
\[ \sum_{a\in G}xa\ot x p(e_a) = \sum_{a\in G} ax\ot p(e_a)x,\]
from which we conclude that $p(g\rhup f)=g\rhup p(f)$.  As a consequence $A\leq Z(G) \iff B\leq Z(G)$.  Note that $B\leq Z(G)$ implies that $F(R)(f\# g) = u(f\com 1)r(g)\bowtie p(f\com 2)v(g)$, thus simplifying \eqref{eq:morph-def}.

Applying $\ev1\ot\id\ot\id\ot\varepsilon$ instead we find
\[ \sum_{c\in G}cx\ot u(e_c) = \sum_{c\in G} xc\ot x\rhup u(e_c),\]
which is equivalent to $g\rhup u(f)=u(g\rhup f)$ for all $g\in G$, $f\in\duc{G}$.

Similarly, applying $\id\ot\varepsilon\ot\id\ot\varepsilon$ yields $r(h)=x\rhup r(x\rhup h)$, or equivalently that $x\rhup rS(h) = rS(x\rhup h)$. Lastly $\id\ot\varepsilon\ot\ev1\ot\id$ yields $g\rhup vS(h) = vS(g\rhup h)$.  Note that $vS\colon \k G\to\k G$ is a morphism of augmented algebras.

This proves necessity in the following, and the sufficiency is a simple check.
\begin{lem}\label{lem:G-commute}
	A weak $R$-matrix $R=\cmorph{}\in\D(G)\ot\D(G)$ satisfies $R\Delta(\varepsilon\# x) = \Delta(\varepsilon\# x)R$ for all $x\in G$ if and only if $u,rS,p,$ and $vS$ all commute with the conjugation actions of $G$.
\end{lem}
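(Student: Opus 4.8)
The plan is to verify that equation \eqref{eq:lazy}, restricted to elements of the form $\varepsilon\bowtie x$, is equivalent to the four stated commutation conditions. The strategy is exactly the one already executed in the body of this section: compute both sides $R\Delta(\varepsilon\bowtie x)$ and $\Delta(\varepsilon\bowtie x)R$ explicitly (these are \eqref{eq:Rx-prod} and \eqref{eq:xR-prod}), and then apply four well-chosen evaluation-and-counit projections $\mathrm{(ev)}\ot\id\ot\mathrm{(ev)}\ot\id$ to extract independent scalar-valued identities. Each projection isolates one of the four components $u,r,p,v$, and the resulting identity translates into the statement that that component (or its twist by $S$) is $G$-equivariant for conjugation.

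Concretely, **first I would** apply $\ev1\ot\id\ot\ev1\ot\id$ to reduce the equality on the first and third tensor legs to $p(g\rhup f)=g\rhup p(f)$, i.e. $p$ commutes with conjugation. **Next**, applying $\ev1\ot\id\ot\id\ot\varepsilon$ isolates $u$ and yields $u(g\rhup f)=g\rhup u(f)$. The remaining two projections $\id\ot\varepsilon\ot\id\ot\varepsilon$ and $\id\ot\varepsilon\ot\ev1\ot\id$ isolate $r$ and $v$ respectively, but here the conjugation appears applied to the \emph{argument} on one side, so one does not directly get equivariance of $r$ and $v$ but rather of their precompositions with the antipode: the identities come out as $x\rhup rS(h)=rS(x\rhup h)$ and $g\rhup vS(h)=vS(g\rhup h)$. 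Since $x$ ranges over all of $G$ and conjugation by $x\inv$ is inverse to conjugation by $x$, each of these is genuinely a two-sided equivariance statement, which is what the lemma asserts. Collecting all four gives sufficiency; **the necessity direction is a routine reversal**, since each projection is injective enough on the relevant graded piece that the four equivariance conditions force the full equality \eqref{eq:lazy} back, once one reassembles the tensor legs.

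The mild subtlety worth flagging is the appearance of the antipode in the $r$ and $v$ conditions rather than bare equivariance of $r,v$. This is forced by the asymmetric way $x$ acts: in $R\Delta(\varepsilon\bowtie x)$ the element $x$ enters by right multiplication and reindexing, whereas in $\Delta(\varepsilon\bowtie x)R$ it enters as a left conjugation $x\rhup(-)$, and matching the two after the counit collapses introduces the inversion inherent in passing between $G$ and $G^{\op}$. One should record that $vS\colon\k G\to\k G$ remains a morphism of augmented algebras (so the condition is not vacuous and interacts correctly with the algebra structure), exactly as noted in the text preceding the lemma.

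**The main obstacle** I anticipate is purely bookkeeping: in expanding $\Delta(\varepsilon\bowtie x)R$ one must carefully track how conjugation by $x$ distributes across the product $(p(e_b)\rhup r(a\rhup s))u(e_c)$ and across the second-leg component $p(e_a)v(s)$, and one must confirm that the summation indices $a,b,c,s$ can be relabeled so that the two sides are compared term-by-term after projection. Because conjugation is an algebra automorphism of both $\duc{G}$ and $\k G$, it commutes with the products and with $\rhup$ in the expected way, so no genuine difficulty arises — but the reindexing must be done consistently, and it is precisely this reindexing that produces the $S$-twist in the $r$ and $v$ cases. No step requires more than these equivariance computations, so the proof is short once the projections are chosen correctly.
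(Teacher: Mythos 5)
Your proposal follows the paper's proof essentially verbatim: both compute \eqref{eq:Rx-prod} and \eqref{eq:xR-prod} and apply the same four projections $\ev1\ot\id\ot\ev1\ot\id$, $\ev1\ot\id\ot\id\ot\varepsilon$, $\id\ot\varepsilon\ot\id\ot\varepsilon$, and $\id\ot\varepsilon\ot\ev1\ot\id$ to extract, respectively, equivariance of $p$, $u$, $rS$, and $vS$, including the antipode twist on the $G^{\op}$-components.

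The one point to fix is your justification of the converse. The four projections are far from jointly injective on $\D(G)\ot\D(G)$, so no appeal to projections being ``injective enough on the relevant graded piece'' can force the full equality \eqref{eq:lazy} from the four projected identities; that argument, taken literally, would fail. The converse (which the paper also dismisses as a simple check) is instead proved by direct substitution: assume the four equivariance conditions, expand both \eqref{eq:Rx-prod} and \eqref{eq:xR-prod} in the standard basis, and use that conjugation by $x$ is an algebra automorphism of $\duc{G}$ and of $\k G$ to reindex the summation variables (e.g.\ $a\mapsto x\rhup a$, and similarly for $b,c,s$) so that the two sums agree term by term. This is routine, but it is a verification on the full tensor expression, not a reversal of the projection argument.
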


\section{Weak \texorpdfstring{$R$}{R}-matrices commuting with \texorpdfstring{$\Delta\du{G}$}{the comultiplication on the group dual}}\label{sec:dual-com}

We now check the equality of \eqref{eq:lazy} for elements of the form $f\bowtie 1$. To this end we compute
\begin{align}\label{eq:Rf-prod-lz}
	R \Delta(f\bowtie 1) &= \sum_{a,b,c,s,t\in G} e_s(abc\rhup f\com2)\bowtie abc\\
	&\qquad {} \ot \bigl(\bigl(p(e_t)v(s)\com1\bigr)\rhup f\com1\bigr)\nonumber\\
    &\qquad{}\qquad{}u(e_c)(p(e_b)\rhup r(a\rhup s))\nonumber \\
	&\qquad{}\qquad{}\bowtie p(e_{at\inv})v(s)\com2\nonumber
\end{align}
and
\begin{equation}\label{eq:fR-prod-lz}
\begin{split}
	(f\com2\bowtie 1\ot f\com1\bowtie 1)R &= \sum_{a,b,c,s\in G} f\com2 e_s\bowtie abc\\
	&\qquad {} \ot f\com1 \bigl( p(e_b)\rhup r(a\rhup s)\bigr)\\
    &\qquad{}\qquad{} u(e_c)\bowtie p(e_a)v(s).
\end{split}
\end{equation}

Applying $\ev1\ot\id\ot\id\ot\varepsilon$ to both expressions and equating we get
\[ \sum_{c\in G} c\ot f u(e_c) = \sum_{a,c\in G} ac\ot u (e_c)(p(e_a)\rhup f).\]
For the special case $f\in\Img(u)$ an application of \eqref{compat:u-alg} shows that $u$ is a morphism of Hopf algebras, from which it follows that we may identify $u^*\in\End(G)$.  Let $c=u^*(h)$ for some $h\in G$.  Then applying $\id\ot\ev{h}$ to the above equality we find
$(p(e_1)\rhup f)(h) = f(h)$.  This equation holds for all $h\in G$ if and only if $B\leq Z(G)$.

Applying $\id\ot\varepsilon\ot\ev1\ot\id$ to \eqref{eq:Rf-prod-lz} and \eqref{eq:fR-prod-lz} and equating we find
\[ \sum_{s\in G}f e_s\ot v(s) = \sum_{s,a\in G} e_s(a\rhup f)\ot p(e_a)v(s).\]
Thus for all $s\in G$ we have
\[ e_s f\ot 1 = \sum_{a\in G}e_s (a\rhup f)\ot p(e_a).\]
This forces $A\leq Z(G)$.  Then from \eqref{compat:v-coalg} we conclude that $v$ is a group homomorphism.  Similarly, \eqref{compat:r-alg} becomes
\begin{align}\label{compat:r-alg2}
	r(gh) = r(g)(v(g)\rhup r(h)).
\end{align}
Since $r$ is unitary we conclude that $r(g)$ is invertible for all $g\in G$.  Subsequently, \eqref{compat:ab-commute} simplifies to
\begin{align}\label{compat:ab-commute2}
	v(g)\rhup u(a)=u(a).
\end{align}

Now applying $\id\ot\varepsilon\ot\id\ot\varepsilon$ and equating we have
\begin{align}\label{eq:uisom-vcentral}
 \sum_{s\in G} f\com 2 e_s\ot f\com1 r(s) &= \sum_{c,s\in G} e_s(c\rhup f\com2)\\
 &\qquad{}\qquad{}\ot (v(s)\rhup f\com1)u(e_c)r(s).\nonumber
\end{align}
In particular for all $s,h\in G$
\[f\com 2 e_s\ot f\com1 r(s)e_h = \sum_{c\in G} e_s (u^*(h)\rhup f\com2)\ot r(s)e_h \left( v(s)\rhup f\com1\right).\]
Therefore for any fixed $s,h\in G$ we have
\[ r(s)(h)f\com2(s)f\com1(h)=r(s)(h)(u^*(h)\rhup f\com2)(s) (v(s)\rhup f\com1)(h)\]
which is equivalent to
\[ r(s)(h)f(hs) = r(s)(h) f(h^{v(s)}s^{u^*(h)}).\]
Since $r(s)$ is invertible, $r(s)(h)\neq 0$.  The arbitrary choice of $f$ then makes this equation equivalent to
\begin{equation}\label{eq:lazy-valpha}
 h^{v(s)}s^{u^*(h)} = hs
\end{equation}
for all $s,h\in G$.

The relation obtained by applying $\ev1\ot\id\ot\ev1\ot\id$ is trivially true in all cases.  This proves necessity in the following, with sufficiency being a simple check.

\begin{lem}\label{lem:dual-commute-lazy}
	A weak $R$-matrix $\cmorph{}\in\D(G)\ot\D(G)$ satisfies $R\Delta(f\bowtie 1) = \Delta(f\bowtie 1) R$ for all $f\in\duc{G}$ if and only if the following all hold:
	\begin{enumerate}
		\item $u$ is a morphism of Hopf algebras, or equivalently $u^*\in \End(G)$;
		\item $A,B\leq Z(G)$;
		\item $v$ is a morphism of Hopf algebras;
		\item $v(g)\rhup u(a)=u(a)$ for all $a\in\duc{G},g\in G^{\op}$;
		\item Equation \eqref{eq:lazy-valpha} is satisfied for all $s,h\in G$.
	\end{enumerate}
\end{lem}

\begin{example}
	Any $u^*,v$ with central image clearly satisfy \eqref{eq:lazy-valpha}.  We will see later that these are the only possibilities for a central weak $R$-matrix.
\end{example}

Now when we consider \eqref{eq:qt}, instead, we easily observe that all of the preceding arguments apply, with the exception that \eqref{eq:lazy-valpha} is replaced with
\begin{align}\label{eq:qt-valpha}
	s^{u^*(h)} h^{v(s)}=hs.
\end{align}

\begin{lem}\label{lem:dual-commute-qt}
	A weak $R$-matrix $R=\cmorph{}\in\D(G)\ot\D(G)$ satisfies \[R\Delta(f\bowtie 1) = \tau(\Delta(f\bowtie 1))R\] for all $f\in\duc{G}$ if and only if the following all hold:
	\begin{enumerate}
		\item $u$ is a morphism of Hopf algebras, or equivalently $u^*\in\End(G)$;
		\item $A,B\leq Z(G)$;
		\item $v$ is a morphism of Hopf algebras;
		\item $v(g)\rhup u(a)=u(a)$ for all $a\in\duc{G},g\in G^{\op}$;
		\item Equation \eqref{eq:qt-valpha} is satisfied for all $s,h\in G$.
	\end{enumerate}
\end{lem}

\begin{example}
	If $u^*$ has central image and for all $s\in G^{\op}$ $v(s)=z s\inv$ for some central element $z$ depending on $s$, then \eqref{eq:qt-valpha} is clearly satisfied.  Conversely, if $v$ has central image and for all $s\in G$ $u^*(s)= z s$ for some central element $z$ depending on $s$, then once again \eqref{eq:qt-valpha} is satisfied.  We will see later that these are the only possibilities for a quasitriangular structure when $G$ is indecomposable, and that $u^*, v$ are naturally built from such examples on indecomposable factors otherwise.
\end{example}

\section{The central weak \texorpdfstring{$R$}{R}-matrices and quasitriangular structures}\label{sec:qts}
Having computed the commutation relations we can now easily give a precise description of the central weak $R$-matrices and the quasitriangular structures.

For any finite group $G$ we may use Krull-Schmidt to write $G=G_0\times G_1\times\cdots\times G_n$ where $G_0$ is abelian and $G_i$ is an indecomposable non-abelian group for all $1\leq i\leq n$.  Let $\iota_i,\pi_i$ be the corresponding injection and projection respectively for $G_i$, $0\leq i\leq n$.  For any endomorphism $w\colon G\to G$ define $w_{ij}=\pi_i\circ w\circ\iota_j$, and set $w_i=w_{ii}$.  The $w_{ij}$ are also endomorphisms of $G$ and uniquely determine $w$ \citep{BCM}.  We make the analogous description when $w\colon G^{\op}\to G$ as well.

\begin{prop}\label{prop:central-cycles}
	A weak $R$-matrix $\cmorph$ is a central weak $R$-matrix if and only if the following all hold:
	\begin{enumerate}
		\item $vS,u^*\in\Hom(G,Z(G))$;
		\item $r$ is a bicharacter, meaning $r\in\Hom(G^{\op},\widehat{G})=\Hom(G,\widehat{G})$;
		\item $A,B\leq Z(G)$.
	\end{enumerate}
	In this case the weak $R$-matrix may be written as
	\begin{align}\label{eq:central-weak}
        \sum_{a,c,s\in G} e_s\bowtie ac \ot r(s)u(e_c)\bowtie p(e_a)v(s).
    \end{align}
\end{prop}
\begin{proof}
	The only remaining case is to suppose $R$ is a central weak $R$-matrix and to show that $v,u^*\in\Hom(G,Z(G))$ follows from \eqref{eq:lazy-valpha}.  By \cref{lem:G-commute} $Sv,u^*$ are normal group endomorphisms $G\to G$.  Decompose $G$ and its endomorphisms as before.  It follows that without loss of generality we may consider \eqref{eq:lazy-valpha} under the assumption that $G$ is indecomposable and non-abelian.  We note that $u^*$ and $(Su^*)*\id$ are simultaneously normal group endomorphisms.  Therefore by normality of $u^*$ and assumptions on $G$ either $u^*$ or $(Su^*)*\id$ is a central automorphism.  In this case the other is necessarily in $\Hom(G,Z(G))$.  Similarly, either $vS$ is a central automorphism or $vS\in\Hom(G,Z(G))$.  It is easily checked that \eqref{eq:lazy-valpha} then holds if and only if $u^*,vS\in\Hom(G,Z(G))$, as desired.
\end{proof}
As a consequence we have the following.
\begin{cor}\label{cor:central-weak-group}
	Define $Z_\k(G)$ to be the maximal subgroup of $Z(G)$ all of whose subgroups are isomorphic to their character groups over $\k$.  Then the central weak $R$-matrices form an abelian group isomorphic to
	\[ \Hom(G,Z(G))^2\times \Hom(G,\widehat{G})\times \End(Z_\k(G)),\]
	where the multiplication of central weak $R$-matrices is given by componentwise convolution products.
\end{cor}

Using similar arguments to those in the proof of \cref{prop:central-cycles} we obtain the following explicit description of the quasitriangular structures of $\D(G)$.

\begin{thm}\label{thm:qts}
	A weak $R$-matrix $\cmorph$ is a quasitriangular structure of $\D(G)$ if and only if the following all hold:
	\begin{enumerate}
		\item $A,B\leq Z(G)$;
		\item $r\in\Hom(G,\widehat{G})$ is a bicharacter;
		\item $u^*,Sv$ are normal endomorphisms of $G$;\label{thm:qts-normal-condition}
		\item For each $1\leq i\leq n$ exactly one of the following holds: \label{thm:qts-autc-condition}
		\begin{enumerate}
			\item $Sv_i\in\Hom(G_i,Z(G_i))$, $u^*_i\in\Aut_c(G_i)$;
			\item $Sv_i\in\Aut_c(G_i)$, $u^*_i\in\Hom(G_i,Z(G_i))$.
		\end{enumerate}
	\end{enumerate}
	In this case we have
	\begin{align}\label{eq:qts-form}
        \cmorph{} = \sum_{a,b,s\in G} e_s\bowtie ab \ot r(s)u(e_b)\bowtie p(e_a)v(s).
    \end{align}
	
	In particular there is a $2^n$-to-1 correspondence between quasitriangular structures and the central weak $R$-matrices.  When $G$ is abelian the quasitriangular structures and central weak $R$-matrices are (trivially) the same.
\end{thm}
\begin{proof}
As noted, the calculations for the central weak $R$-matrices applies to this case as well, with much the same arguments showing that \eqref{eq:qt-valpha} yields the stated description of the components for $v$ and $u^*$.

For the last claim, suppose we are given a set $E\in\P(\{1,...,n\})$. Then for a central weak $R$-matrix $\cmorph$ we construct a quasitriangular structure $\cmorph[u'][r][p][v']$ by defining
\begin{align*}
	u'_{ij} &= \begin{cases}u_{ij}& i\neq j \mbox{ or } i\not\in E\\
	Su_i*\id& i=j \mbox{ and } i\in E;\end{cases}\\
	v'_{ij} &= \begin{cases} v_{ij}& i\neq j \mbox{ or } i\in E\\
	v_i*S & i=j \mbox{ and } i\not\in E.\end{cases}
\end{align*}
That reversing this process yields a central weak $R$-matrix follows from \citep{AY65}.
\end{proof}
Note that the correspondence is dependent upon the choice of decomposition of $G$.  Since the quasitriangular structures themselves are independent of this decomposition, we will simply leave a fixed but otherwise arbitrary choice of decomposition for $G$ implicit.  The quasitriangular structures associated to the sets $E=\emptyset$ and $E=\{1,...,n\}$, however, are canonically determined and do not depend on the choice of decomposition.  The quasitriangular structures obtained from the trivial weak $R$-matrix are the standard quasitriangular structures $R_0$ and $R_1=\tau(R_0\inv) = \sum_{g\in G}e_g\bowtie 1\ot \varepsilon\bowtie g\inv$ irrespectively.

\begin{rem}
	The last two conditions from the theorem can be restated as follows.  Let $\hat{v}\colon G/G_0\to G/G_0$ be given by $\hat{v}_{ij}=Sv_{ij}$ for $i,j>0$, and similarly for $\hat{u}$.  Then the last two conditions are equivalent to $\hat{v},\hat{u}^*$ being normal and $\hat{u}^**\hat{v}\in\Aut_c(G/G_0)$.  In other words, $\hat{v}$ and $\hat{u}^*$ give a convolution factorization of a central automorphism of $G/G_0$ into normal endomorphisms.  It is worth pointing out that neither $v_0$ nor $u_0$ need be an isomorphism, and indeed that $u^* * v$ need not be a central automorphism of $G$.
\end{rem}

\begin{example}\label{ex:transfers}
	For any quasitriangular structure $R=\cmorph$ associated to $E\in\P(\{1,...,n\})$ we can easily check that $\tau(R\inv)=(Sv^*,Sr^*,Sp^*,Su^*)$ is a quasitriangular structure associated to $E^c$.  Indeed, $R$ is obtained from the trivial central weak $R$-matrix if and only if $\tau(R\inv)$ is obtained from the trivial central weak $R$-matrix.
\end{example}

\section{Ribbon elements}\label{sec:ribbon}
We now recall the basic facts about ribbon Hopf algebras, which can be found in \citep{Rad:Book}. Given a quasitriangular Hopf algebra $(H,R)$, we define the Drinfel'd element to be $u_R = m(\tau(R\inv))$, where $m\colon H\otimes H\to H$ is the multiplication of $H$.  This element satisfies $S^2(h) = u h u\inv$ for all $h\in H$.  We say that $\nu\in H$ is a ribbon element of $(H,R)$ if $\nu^2 = u Su$, $\nu$ is central and invertible in $H$, $\varepsilon(\nu)=1$, $S\nu = \nu$, and
\[ \tau(R)R \ \Delta(\nu) = \nu\otimes \nu.\] When such a $\nu$ exists, we say that $(H,R,\nu)$, or just $(H,R)$ or $H$ when there is no ambiguity, is a ribbon Hopf algebra.

In general a ribbon element is not necessarily uniquely defined when it exists, but by taking the ratio of any two ribbon elements we see that they differ by multiplication by a central group-like element of $H$ that has order dividing 2.  In the case where $\D(G)=H$, the group-likes are precisely $\widehat{G}\times G$, which has center $\widehat{G}\times Z(G)$.

We will now show that $(\D(G),R)$ admits a ribbon element for any choice of quasitriangular structure.
\begin{thm}\label{thm:ribbon}
	Let $R=\cmorph$ be a quasitriangular structure of $\D(G)$. Then for the quasitriangular Hopf algebra $(\D(G),R)$ the Drinfel'd element is
		\[ u_R= \sum_{a,s\in G} r(s) e_{s\inv}\bowtie p(e_a)a\inv v(s)u^*(s).\]
	Furthermore, $u_R$ is also a ribbon element.  
\end{thm}
\begin{proof}
	Let $R=\cmorph{}$ be a quasitriangular structure.  By definition $u_R=m(\tau(R\inv))$.  We have
	\[ R\inv = \id\ot S(R) = \sum_{s,a,b\in G}e_{s\inv}\bowtie a\inv b\inv\ot r(s) u(e_b)\bowtie p(e_a)v(s),\]
	whence
	\begin{align*}
		u_R=m(\tau(R\inv)) &= \sum_{s,a,b\in G} r(s)u(e_b)(v(s)\rhup e_{s\inv})\bowtie p(e_a)v(s)a\inv b\inv\\
		&= \sum_{s,a,b\in G} r(s) u(e_b)e_{s\inv} \bowtie p(e_a)a\inv v(s)b\inv,
	\end{align*}
	where we have used that the components of $v$ are either central or central automorphisms, and $A,B\leq Z(G)$.  We then observe that $u(e_b)e_{s\inv}\neq 0$ if and only if $u^*(s\inv)=b$.  This gives the desired formula for $u_R$.
	
	Now since any component of $v$ which is not central implies that the same component of $u^*$ is central, and vice versa, we see that $u^*\comm v$.  Since all components of $u^*$ are also either central or central automorphisms, we conclude that $Su_R=u_R$.  Since $\D(G)$ is involutory $u_R$ must be central.

    The only relation $u_R$ must then satisfy which is non-trivial and not yet established is that $\tau(R)R \ \Delta(u_R)=u_R\otimes u_R$.  This is equivalent to $R \ \Delta(u_R) = \tau(R\inv) u_R\otimes u_R$. We will show this relation holds when $u^*\in\Aut_c(G)$ is a central isomorphism and $Sv\in\Hom(G,Z(G))$ is a central homomorphism.  A similar argument, which we omit, then shows that the relation also holds when $u^*\in\Hom(G,Z(G))$ and $Sv\in\Aut_c(G)$.  This establishes the result for an indecomposable non-abelian group, and the general case then follows by breaking $u,v$ into components.  For simplicity of performing calculations, we may suppose that $v$ has domain $G$ (rather than $G^{\op}$) by replacing it with $Sv$; meaning that all $v(s)$ terms in expressions will be replaced with $v(s\inv)$.

    For ease of reference, we recall the following identities.
    \begin{align}
      R &= \sum_{s,a,t\in G} e_s\bowtie au^*(t)\otimes r(s)e_t\bowtie p(e_a)v(s\inv);\label{eq:R-def-tmp}\\
      R\inv &= \sum_{s,a,t\in G} e_{s\inv}\bowtie a\inv u^*(t\inv)\otimes r(s)e_t\# p(e_a)v(s\inv);\label{eq:Rinv-def-tmp}\\
      \tau(R\inv) &= \sum_{s,a,t\in G} r(s)e_t\bowtie p(e_a)v(s\inv)\otimes e_{s\inv}\bowtie a\inv u^*(t\inv);\label{eq:tauRinv-def-tmp}\\
      u_R &= \sum_{y,b\in G} r(y\inv) e_y\bowtie p(e_b)b\inv v(y)u^*(y\inv).\label{eq:u-def-temp}
    \end{align}
    We then have
    \begin{align}\label{eq:Delta-uR}
    \begin{split}
      \Delta u_R &= \sum_{y,h,c,b\in G} r(y\inv) e_h\bowtie p(e_c)b\inv v(y)u^*(y\inv) \\
      &\quad\quad\otimes r(y\inv)e_{yh\inv}\bowtie p(e_{bc\inv})b\inv v(y)u^*(y\inv).
    \end{split}
    \end{align}
    We can then compute that
    \begin{align}
      R\Delta u_R &= \sum_{y,s,t,h,c,b,a\in G} r(y\inv) e_s e_{tht\inv} \bowtie p(e_c)b\inv a v(y)u^*(ty\inv)\nonumber\\
       &\qquad\qquad{}\otimes r(sy\inv) e_t e_{yh\inv} \bowtie p(e_a)p(e_{bc\inv}) b\inv v(s\inv y)u^*(y\inv)\nonumber\\
      &=\sum_{y,h,c,b\inv G} r(y\inv) e_{yhy\inv}\bowtie p(e_c)c\inv v(y)u^*(yh\inv y\inv)\nonumber\\
      &\qquad\qquad{}\otimes r(hy\inv) e_{yh\inv}\bowtie p(e_{bc\inv})b\inv v(h\inv y)u^*(y\inv)\nonumber\\
      \begin{split}\label{eq:R-delta-uR}
      &=\sum_{y,x,c,b\in G} r(y\inv) e_{x\inv} \bowtie p(e_c)c\inv v(y)u^*(x)\\
      &\qquad\qquad{}\otimes r(y\inv x\inv)e_{xy}\bowtie p(e_{bc\inv})b\inv v(xy)u^*(y\inv).
      \end{split}
    \end{align}

    Next we have
    \begin{equation}\label{eq:uR-tensor}
        \begin{split}
          u_R\otimes u_R &= \sum_{g,h,b,c\in G} r(g\inv) e_g\bowtie p(e_c)c\inv v(g)u^*(g\inv)\\
          &\qquad\qquad{}\otimes r(h\inv)e_h\bowtie p(e_b)b\inv v(h)u^*(h\inv),
        \end{split}
    \end{equation}
    and thus
    \begin{align}\label{eq:tau-uR}
        \tau(R\inv)& u_R\otimes u_R\nonumber\\
        &= \sum_{s,g,t,a,c,h,b\in G} r(sg\inv) e_t e_g\bowtie p(e_a)p(e_c)c\inv v(s\inv g)u^*(g\inv)\nonumber\\
        &\qquad\qquad{}\otimes r(h\inv)e_{s\inv} e_{t\inv ht}\bowtie p(e_b)b\inv a\inv v(h)u^*(t\inv h\inv)\nonumber\\
        &= \sum_{h,g,b,c\in G} r(h\inv g\inv) e_g \bowtie p(e_c)c\inv v(hg)u^*(g\inv)\nonumber\\
        &\qquad\qquad{}\otimes r(h\inv) e_{g\inv h g}\bowtie p(e_b)b\inv c\inv v(h)u^*(g\inv h\inv)\nonumber\\
        &= \sum_{h,g,b,c\in G} r(h\inv g) e_{g\inv} \bowtie p(e_c)c\inv v(hg\inv)u^*(g)\nonumber\\
        &\qquad\qquad{}\otimes r(h\inv) e_{ghg\inv} \bowtie p(e_b)b\inv c\inv v(hg)u^*(gh\inv)\nonumber\\
        &= \sum_{g,y,b,c\in G} r(y\inv) e_{g\inv} p(e_c)c\inv v(y)u^*(g)\nonumber\\
        &\qquad\qquad{}\otimes r(y\inv g\inv) e_{gy}\bowtie p(e_b)b\inv c\inv v(gy)u^*(y\inv)\nonumber\\
        \begin{split}
        &= \sum_{x,y,b,c\in G} r(y\inv) e_{x\inv} \bowtie p(e_c)c\inv v(y)u^*(x)\\
        &\qquad\qquad{}\otimes r(y\inv x\inv) e_{xy}\bowtie p(e_{bc\inv})c\inv v(xy)u^*(y\inv).
        \end{split}
    \end{align}
    This is precisely \cref{eq:R-delta-uR}, and so completes the proof that $u_R$ is a ribbon element.
\end{proof}
\begin{rem}
  An informal proof that $u_R$ is a ribbon element is as follows.  By \cref{thm:qts} the only meaningful impact the field has on a quasitriangular structure $R$ of $\D(G)$, and subsequently its Drinfel'd element $u_R$ and the desired identity $\tau(R)R \ \Delta(u_R)=u_R\otimes u_R$, is that it limits the choices for $p$ and $r$. Thus if the identity always holds in one field, it must hold in any field compatible with $p,r$.  Furthermore, it is well-known that any semisimple and cosemisimple quasitriangular Hopf algebra $H$ has its Drinfel'd element as a ribbon element.  Since $\D(G)$ always has these properties when $\k$ is algebraically closed with characteristic zero, $u_R$ must always be a ribbon element.
\end{rem}
\begin{example}
  For $(\D(G),R_0)$ we have $u_{R_0} = \sum_{g\in G} e_g\bowtie g$ is a ribbon element.  For $(\D(G),R_1)$ we have $u_{R_1} = \sum_{g\in G} e_g\bowtie g\inv$.  Both of these are well-known.
\end{example}

\section{Equivalence under \texorpdfstring{$\Aut(\D(G))$}{Aut(D(G))}}\label{sec:equivalence}
In this section we investigate the equivalence relation on quasitriangular structures given in \cref{df:equiv-QTS}.  We note that if $R\sim R'$ are two equivalent quasitriangular structures of $\D(G)$, then any isomorphism of the quasitriangular Hopf algebras $(\D(G),R)$ and $(\D(G),R')$ induces a braided equivalence of their representation categories.  In general, though, it is possible for non-isomorphic quasitriangular Hopf algebras to have representation categories which are equivalent as braided tensor categories. As has been mentioned, it is beyond the scope of this paper to settle equivalence at the categorical level, but in light of the conjectural connection between $\Aut(\D(G))$ and the structure of (braided) autoequivalences of $\Rep(\D(G))$ \citep{LenPri:Monoidal,LenPri:Brauer}, the classical picture may in fact resolve most, if not all, of the categorical one.

We first recall the fundamental properties of $\Aut(\D(G))$.
\begin{thm}\label{thm:aut-dg}\citep{K14,KS14,LenPri:Brauer}
  Every automorphism $X$ of $\D(G)$ can be described by a matrix
  \[ X=\begin{pmatrix}
    \alpha&\beta\\
    \gamma&\delta
  \end{pmatrix}\]
  where $\alpha^*,\delta$ are normal group homomorphisms, $\beta\in\BCh{G}$ is a bicharacter, and $\gamma\colon\du{G}\to\k G$ is a morphism of Hopf algebras associated to central subgroups $A,B$. The automorphism is explicitly given by
  \[ f\bowtie x\mapsto \beta(x) \alpha(f\com1)\bowtie \gamma(f\com 2)\delta(x)\]
  for all $f\in\duc{G}$ and $x\in G$.

  Any $X\in\Aut(\D(G))$ also satisfies $X^*\in\Aut(\D(G))$, where $X^*$ is the linear dual of $X$ given by
  \[ X^*=\begin{pmatrix}\delta^*&\beta^*\\\gamma^*&\alpha^*.\end{pmatrix}\]

  When $G$ is purely non-abelian then $\alpha^*,\delta\in\Aut(G)$ and $\delta\alpha^*\in\Aut_c(G)$. Moreover, in this case $\Aut(\D(G))$ consists precisely of those matrices satisfying these conditions on $\alpha,\beta,\gamma,$ and $\delta$.
\end{thm}

\begin{rem}\label{rem:R-as-mat}
  If $R=\cmorph$ is a central weak $R$-matrix or a quasitriangular structure for $\D(G)$, then we may similarly write $F(R)$ as a matrix
  \[\begin{pmatrix} u&r\\p&v\end{pmatrix}.\]
  These matrix descriptions for $R$ and $\Aut(\D(G))$ are compatible, in the sense that we may perform matrix multiplications in the usual fashion, but where multiplication is composition and addition is convolution product. Some examples of this are given in \cref{ex:structure-dupes} below.
\end{rem}

\begin{df}\label{df:isom-ribbon}
We say that ribbon Hopf algebras $(H,R,\nu), (K,R',\nu')$ are isomorphic as ribbon Hopf algebras if there exists an isomorphism of quasitriangular Hopf algebras $X\colon (H,R)\to (K,R')$ such that $X(\nu)=\nu'$.  In this case $X$ is called an isomorphism of ribbon Hopf algebras.
\end{df}

By \citep[Theorem 3.5]{K14} we have that $\D(G)\cong \D(H)$ for finite groups $G,H$ if and only if $G\cong H$. The following standard results then show that \cref{df:isom-ribbon} is essentially the same as \cref{df:equiv-QTS} when $H=K=\D(G)$.
\begin{lem}\label{lem:quasi-fixes-drin}
    Let $(H,R), (K,R)$ be two quasitriangular Hopf algebras with Drinfel'd elements $u_H,u_K$ respectively.  Suppose $X\colon (H,R)\to (K,R')$ is an isomorphism of quasitriangular Hopf algebras.  Then $X(u_H)=u_{K}$.
\end{lem}
\begin{proof}
  Let $X,R,R'$ be as in the statement with $X\ot X(R)=R'$.  Then
  \begin{align*}
    u_{K}&=m(\tau(R^{\prime-1}))\\&= m(\tau(X\ot X(R\inv)))\\&= m(X\ot X\tau(R\inv))\\
    &= X(m(\tau(R\inv)))\\
    &=X(u_H).
  \end{align*}
\end{proof}
\begin{cor}\label{cor:quasi-is-ribbon}
  If $R,R'$ are two quasitriangular structures of $\D(G)$ and $X\in\Aut(\D(G))$ is such that $X\ot X(R)=R'$, then $X$ is an isomorphism of ribbon Hopf algebras $(\D(G),R,u_R)\to (\D(G),R',u_{R'})$.
\end{cor}
\begin{proof}
  Apply the preceding lemma and \cref{thm:ribbon}.
\end{proof}

More generally, we can describe the action of $\Aut(\D(G))$ on the quasitriangular structures as follows.

\begin{thm}\label{prop:aut-qts}
	Let $R=\cmorph{},R'=(u',r',p',v')$ be quasitriangular structures of $\D(G)$, and let
    \[X=\begin{pmatrix} \alpha&\beta\\ \gamma&\delta\end{pmatrix}\in\Aut(\D(G)).\]
    Then the following are equivalent:
\begin{enumerate}
  \item $X\ot X(R)=R'$;
  \item $X\circ (F(R) \circ X^*) = F(R')$;
  \item The following four identities all hold:
	\begin{align}
		u'=& \alpha r\gamma^*+\alpha u \delta^* + \beta p \delta^* + \beta v \gamma^*;\label{eq:aut-qts-u}\\
		r'=& \alpha r \alpha^* + \alpha u \beta^*+\beta p \beta^*+\beta v \alpha^*;\label{eq:aut-qts-r}\\
		p'=& \gamma r\gamma^*+\gamma u\delta^*+\delta p\delta^*+\delta v\gamma^*;\label{eq:aut-qts-p}\\
		v'=& \gamma r\alpha^*+\gamma u\beta^*+\delta p \beta^*+\delta v\alpha^*,\label{eq:aut-qts-v}
	\end{align}
where addition denotes convolution product.
\end{enumerate}
\end{thm}
\begin{proof}
    Let $X, R, R'$ be as in the statement.  Then by \citep{KS14}
    \[ X^*=\begin{pmatrix} \delta^*&\beta^*\\ \gamma^*&\alpha^*\end{pmatrix}\in\Aut(\DDG)\]
    is given by $X^*(f\# s) = \sum_{t\in G} \beta^*(s)\delta^*(f\com1)\# \gamma^*(f\com2)\alpha^*(s)$ for all $f\#g\in\DDG$.

	Now $X\ot X(R)=R'$ is equivalent to
    \begin{align}\label{eq:equiv-as-func}
        F(R')(f\# h) &= \sum_{s,g\in G}\ev{h}\ot f(X(e_s\bowtie g))\, (X\circ F(R))(e_g\# s)
    \end{align}
    for all $f\# h\in\DDG$.  We then have
    \begin{align*}
      F(R')(\varepsilon\# h) &= \sum_{s,g\in G}\ev{h}(\beta(g)\alpha(e_s))\, (X\circ F(R))(e_g\# s)\\
      &= \sum_{g\in G}\beta(g,h)\, (X\circ F(R))(e_g\# \alpha^*(h))\\
      &= (X\circ F(R))(\beta^*(h)\# \alpha^*(h))\\
      &= (X\circ (F(R)\circ X^*))(\varepsilon\# h).
    \end{align*}
    Furthermore,
    \begin{align*}
      F(R')(f\# 1) &= \sum_{s,g\in G} f(\gamma(e_s) \delta(g)) (X\circ F(R))(e_g\# s)\\
      &= \sum_{s,g\in G} f\com2(\gamma(e_s))f\com1(\delta(g)) (X\circ F(R))(e_g\# s)\\
      &= X\circ F(R)(\delta^*(f\com1)\# \gamma^*(f\com2)).\\
      &= (X\circ (F(R)\circ X^*))(f\# 1).
    \end{align*}
    This shows that the first two items are equivalent.

    That the second item is equivalent to the third follows from \cref{thm:morphisms}; \cref{abm:p-def,abm:r-def,abm:u-def,abm:v-def} in particular.
\end{proof}
\begin{cor}
	Let $R,R'$ be quasitriangular structures of $\D(G)$ and $X\in\Aut(\D(G))$.
	Then $X\ot X(R)=R'$ if and only if $X\ot X(\tau(R\inv))=\tau(R^{\prime-1})$.
\end{cor}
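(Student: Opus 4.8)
The plan is to reduce everything to a single equivariance identity: the map $\Phi(R)=\tau(R\inv)$ commutes with the action of $X\ot X$. Concretely, I would show
\[ X\ot X(\tau(R\inv)) = \tau\bigl((X\ot X(R))\inv\bigr). \]
This rests on two elementary observations. First, since $X$ is a Hopf algebra automorphism, $X\ot X$ is an algebra automorphism of $\D(G)\ot\D(G)$; as weak $R$-matrices are invertible (Section \ref{sec:prelim}), it carries inverses to inverses, so $X\ot X(R\inv)=(X\ot X(R))\inv$. Second, the flip $\tau$ acts by swapping the two tensor legs while $X\ot X$ applies the same map $X$ to each leg, so $\tau$ and $X\ot X$ commute: $\tau\circ(X\ot X)=(X\ot X)\circ\tau$. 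Combining these two facts gives the displayed identity directly.

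With the equivariance in hand, the corollary is immediate. If $X\ot X(R)=R'$, then applying $\Phi$ to both sides and using equivariance yields $X\ot X(\tau(R\inv))=\tau(R'\vphantom.\inv)$. Conversely, suppose $X\ot X(\tau(R\inv))=\tau(R'\vphantom.\inv)$; by equivariance the left side equals $\tau\bigl((X\ot X(R))\inv\bigr)$, so $\tau\bigl((X\ot X(R))\inv\bigr)=\tau(R'\vphantom.\inv)$. Since $\tau$ is an involution and inversion is a bijection on the invertible elements — and both $X\ot X(R)$ and $R'$ are quasitriangular structures, hence invertible — I may cancel $\tau$ and then invert to conclude $X\ot X(R)=R'$.

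There is essentially no obstacle here: the content is the bookkeeping that $\tau(R\inv)$ is again a quasitriangular structure (already recorded in Example \ref{ex:transfers}, where it is identified with the quadruple $(Sv^*,Sr^*,Sp^*,Su^*)$) together with the two commutation facts above. As a sanity check one could instead verify the identity componentwise, applying the explicit formulas of Lemma \ref{lem:aut-qts} both to $R$ and to the description of $\tau(R\inv)$ from Example \ref{ex:transfers} and matching the four components; but this is only a more laborious route to the same equivariance statement, so I would present the structural argument as the proof.
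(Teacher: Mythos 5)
Your proof is correct, and it takes a genuinely different route from the paper's. The paper proves the corollary inside its quadruple formalism: it invokes Example \ref{ex:transfers}, which identifies $\tau(R\inv)$ with the quadruple $(Sv^*,Sr^*,Sp^*,Su^*)$, together with the transformation formulas of Lemma \ref{lem:aut-qts} and the recorded properties of the automorphism components $\alpha,\beta,\gamma,\delta$, so that the equality is verified component by component. You instead prove a single equivariance identity, $X\ot X(\tau(R\inv))=\tau\bigl((X\ot X(R))\inv\bigr)$, from two structural facts: $X\ot X$ is an algebra automorphism of $\D(G)\ot\D(G)$ and hence commutes with inversion of invertible elements (weak $R$-matrices being invertible, with $R\inv=S\ot\id(R)$), and $X\ot X$ commutes with the flip $\tau$ because the same map is applied in each tensor leg. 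Both directions of the equivalence then follow by cancellation, since $\tau$ is an involution and inversion is injective on units. Your argument is more elementary and strictly more general — it uses nothing about $G$, the quadruple description, or even quasitriangularity beyond invertibility, so it would apply verbatim to any Hopf algebra. What the paper's componentwise route buys in exchange is that it stays in the bookkeeping used throughout Section \ref{sec:ribbons}: it makes visible how the quadruple and the associated set ($E$ versus $E^c$) transform under $X$, which is exactly the information exploited in the proposition that follows ($\sigma(E)=E'$). Either proof is acceptable; yours is the cleaner one for the statement as given.
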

\begin{proof}
	The result follows from \cref{ex:transfers,prop:aut-qts}.
\end{proof}

\begin{example}\label{ex:structure-dupes}
Consider the quasitriangular structures
\[ R=\cmorph[1][r][0][0] \mbox{ and } R'=\cmorph[1][0][p][0]\]
of $\D(G)$.  We have
  \begin{align*}
    X&=\begin{pmatrix} 1 & r^*\\0&1\end{pmatrix}\in\Aut(\D(G));\\
    Y&=\begin{pmatrix} 1& 0\\p&1\end{pmatrix}\in\Aut(\D(G)).
  \end{align*}
  Moreover, we can use the matrix notations to compute $X\circ(F(R_0)\circ X^*)$:
  \begin{align*}
    X(\begin{pmatrix} 1&0\\0&0\end{pmatrix} \begin{pmatrix} 1 & r\\0&1\end{pmatrix}) &= \begin{pmatrix} 1 & r^*\\0&1\end{pmatrix}\begin{pmatrix}
      1&r\\0&0
    \end{pmatrix}\\
    &= \begin{pmatrix}
      1&r\\0&0
    \end{pmatrix},
  \end{align*}
  which shows that $X\ot X(R_0)=R$.  A similar calculation shows that $Y\ot Y(R_0)=R'$.

  \Cref{prop:aut-qts} then implies that $X$ is an isomorphism of ribbon Hopf algebras
  \[(\D(G),R_0,u_{R_0})\to (\D(G),R,u_R),\]
  and that $Y$ is an isomorphism of ribbon Hopf algebras
  \[(\D(G),R_0,u_{R_0})\to (\D(G),R',u_{R'}).\]
\end{example}

More generally, we have the following description of the orbit of $R_0$.
\begin{prop}\label{prop:R0-orbit}
  Let $G$ be a purely non-abelian finite group and let $R=\cmorph$ be a quasitriangular structure of $\D(G)$.  Then $R\sim R_0$ if and only if $u^*\in\Aut_c(G)$ and $v=pu\inv r$.
\end{prop}
\begin{proof}
  By \cref{prop:aut-qts}, $R\sim R'$ if and only if
  \begin{align*}
    u &= \alpha\delta^*,\\
    r &= \alpha\beta^*,\\
    p &= \gamma\delta^*,\\
    v &= \gamma\beta^*,
  \end{align*}
  for some \[X=\begin{pmatrix}
    \alpha&\beta\\\gamma&\delta
  \end{pmatrix}\in\Aut(\D(G)).\]
  By \cref{thm:aut-dg} we have $\alpha^*,\delta\in\Aut(G)$ and $u^*=\delta\alpha^*\in\Aut_c(G)$.  Then given $\alpha,\delta$ we can always solve $r=\alpha\beta^*$ and $p=\gamma\delta^*$ for $r,p$ or $\beta,\gamma$ when given the other two.  Indeed, we have
  \begin{align*}
    v &= \gamma\beta^*\\
    &= p\delta^{*-1}\alpha\inv r\\
    &= p(\alpha\delta^*)\inv r\\
    &= pu\inv r.
  \end{align*}
  This completes the proof.
\end{proof}
In the case when $G$ has an abelian direct factor, the main obstruction is that it need no longer be the case that $\alpha^*,\delta\in\Aut(G)$ or that $\delta\alpha^*\in\Aut_c(G)$.  Indeed, when $G$ is abelian then $\alpha,\delta$ can both be trivial, since $\gamma,\beta$ can then be isomorphisms. Nevertheless, since $\Aut(\D(G))$ is known even in the purely non-abelian case \citep{KS14,LenPri:Brauer}, one can in principle always determine the equivalence class of $R_0$.

\begin{cor}
  Let $G$ be a purely non-abelian group. Then there are precisely $|\Aut_c(G)|\cdot |\BCh{G}|\cdot |\End(Z(G))|$ quasitriangular structures of $\D(G)$ which are equivalent to $R_0$.
\end{cor}
\begin{proof}
  By \citep[Theorem 6.7]{K14} we have \[|\Aut(\D(G))|=|\Aut(G)|\cdot |\Aut_c(G)|\cdot |\BCh{G}|\cdot|\End(Z(G))|\]
  and also by \citep[Example 9.6]{K14} that the stabilizer of $R_0$ is (isomorphic to) $\Aut(G)$.  Thus the size of the orbit follows.  Alternatively, the order also follows directly from \cref{prop:R0-orbit}.
\end{proof}
Indeed, for such a $G$ it follows from \cref{thm:qts} that there are precisely
\[ |\Aut_c(G)|\cdot |\BCh{G}| \cdot |\End(Z(G))| \cdot |\Hom(G,Z(G))|\]
quasitriangular structures $\cmorph$ of $\D(G)$ with $u^*\in\Aut_c(G)$. Thus there are quasitriangular structures with $u^*\in\Aut_c(G)$ which are inequivalent to $R_0$ if and only if $\Hom(G,Z(G))$ is non-trivial, or equivalently that $\gcd([G:G'],|Z(G)|)\neq 1$.

In general, it need not be the case that every $v\in\Hom(G,Z(G))$ appears in a quasitriangular structure $\cmorph$ that is equivalent to $R_0$.  At the extreme end, sometimes $v$ must in fact always be trivial.
\begin{cor}\label{cor:z-not-dupe}
  Let $G$ be a finite group.  Then the following are equivalent.
  \begin{enumerate}
    \item $G$ is a stem group.
    \item Every quasitriangular structure $\cmorph$ of $\D(G)$ with \[\cmorph\sim R_0\] has $v$ trivial.
  \end{enumerate}
\end{cor}
\begin{proof}
  From \cref{eq:aut-qts-v} if $\cmorph\sim R_0$ then $v=\gamma\beta^*$.  That this composition is trivial for all choices of $\gamma,\beta$ is equivalent to $Z(G)\subseteq G'$, which is the definition of a stem group.
\end{proof}
\begin{example}\label{ex:dihedral-z}
    $G=D_8$, the dihedral group with 8 elements, is a stem group with $\Hom(G,Z(G))$ non-trivial.  Thus for $R=\cmorph[1][0][0][z]$ with $z\in\Hom(G,Z(G))$ non-trivial the ribbon Hopf algebras $(\D(G),R_0,u_{R_0})$ and $(\D(G),R,u_R)$ are non-isomorphic.
\end{example}
\begin{example}
  On the other hand, \cref{prop:R0-orbit} also says that any quasitriangular structure of the form $\cmorph[1][r][p][0]$ with $pr\neq 0$ is necessarily not equivalent to $R_0$.  The preceding corollary also guarantees that such choices for $p,r$ always exist when $G$ is not a stem group.
\end{example}

\section{Factorizability}\label{sec:factor}
A quasitriangular Hopf algebra $(H,R)$ is said to be factorizable if $\tau(R)R$ is a non-degenerate tensor on $H\ot H$.  Equivalently, if the linear map $H^*\to H$ given by $f\mapsto (m\circ(f\ot\id))(\tau(R)R)$ is bijective.  We conclude the paper by considering this property for quasitriangular structures of $\D(G)$.  We need a number of basic definitions and lemmas to proceed.

As usual, we fix a finite group $G$ and a decomposition \[G=G_0\times G_1\times\cdots G_n,\] where $G_0$ is abelian and $G_1,...,G_n$ are indecomposable non-abelian groups.
\begin{df}\label{def:factor-defs}
  Let $E\subseteq\{1,...,n\}$.  We make the following definitions.
  \begin{enumerate}
    \item $\pi_E$ is the canonical retraction to the subgroup $\prod_{i\in E} G_i$.  This is given by the canonical surjection $G\to\prod_{i\in E} G_i$ followed by the canonical injection $\prod_{i\in E} G_i\to G$.
    \item $\pi_{E^c}$ is the canonical retraction to the subgroup $\prod_{0<i\not\in E} G_i$.
    \item Given $x\in G$ we may uniquely write $x= x_0 x_E x_{E^c}$ where $x_0\in G_0$, $x_E=\pi_E(x)$, and $x_{E^c}= \pi_{E^c}(x)$.
    \item We define a group
    \[ G_E = G_0 \times \pi_E(G) \times (\pi_{E^c}(G))^{\op}.\]
    This group is canonically isomorphic to $G$.
  \end{enumerate}
\end{df}

\begin{lem}\label{lem:twist-basis}
  For any $E\subseteq\{1,...,n\}$ the following is a basis of $\DDG$:
  \[ \{ (\pi_E(x)\rhup e_y)\# (\pi_{E^c}(y)\rhup x) \ | \ x,y\in G\}.\]
\end{lem}
\begin{proof}
  Indeed, we can show that this is precisely the standard basis of $\DDG$.  So for $g,h,x,y\in G$ we have
  $(\pi_E(x)\rhup e_y)\# (\pi_{E^c}(y)\rhup x) = e_g \# h$ if and only if, in the notation of \cref{def:factor-defs}, the following all hold
  \begin{align*}
    y_0 &= g_0;& y_{E^c}&=g_{E^c};& y_E&=g_E^{x_E}=g_E^{h_E};\\
    x_0 &= h_0;& x_E &= h_E;& x_{E^c}&=h_{E^c}^{y_{E^c}}=h_{E^c}^{g_{E^c}}.
  \end{align*}
  Thus $g,h$ uniquely determine $x,y$ and conversely, and the desired claim follows.
\end{proof}

\begin{prop}\label{prop:fact-morph}
  Let $R=\cmorph$ be a quasitriangular structure of $\D(G)$ associated to the set $E\subseteq \{1,...,n\}$ as in the proof of \cref{thm:qts}.  Then we have a well-defined morphism
  \begin{align}\label{eq:fact-morph}
  \Psi_R=\begin{pmatrix}
    v^*+u&r^*+r\\
    p^*+p&u^*+v
  \end{pmatrix}\in\End(\D(G_E)).
  \end{align}
\end{prop}
\begin{proof}
  The definition of $G_E$ in \cref{def:factor-defs} and the properties of the components of $R$ in \cref{thm:qts} combined with the results of \citep{ABM,K14} show that $\Psi_R$ is a well-defined endomorphism of $\D(G_E)$.
\end{proof}
Since $G\cong G_E$ canonically, we can always identify the underlying vector spaces of $\D(G)$ and $\D(G_E)$.  We do so whenever convenient without further mention.

\begin{example}
  For $R_0$ we have that $\Psi_{R_0}$ is the identity of $\D(G)$.  In particular, $\Psi_{R_0}$ is an automorphism, and $R_0$ is well-known (and easily shown) to be factorizable.

  Similarly, for $R_1$ we have that $\Psi_{R_1}$ is the identity of $\D(G^{\op})$. Thus $\Psi_{R_1}$ is an automorphism, and $R_1$ is well-known to be factorizable.
\end{example}

Indeed, we can now complete our goal by showing that this relation between $\Psi_R$ and the factorizability of $R$ holds for arbitrary $R$.

\begin{thm}
  Let $R=\cmorph$ be a quasitriangular structure of $\D(G)$ associated to the set $E\subseteq \{1,...,n\}$ as in the proof of \cref{thm:qts}. Define $\Psi_R$ as in \cref{prop:fact-morph}.

  Then $F(\tau(R)R)$ and $\Psi_R$ have the same image. Therefore $R$ is factorizable if and only if $\Psi_R\in\Aut(\D(G_E))$.
\end{thm}
\begin{proof}
  By \cref{thm:qts} we have that $u^*,v$ are normal group endomorphisms such that $u^*\comm v$. With this we can easily compute that
  \begin{align*}
    \tau(R)R &= \sum_{s,t,a,b\in G} (v(t)\rhup e_s)\bowtie ab v(t)u^*(t)\\
    &\qquad\qquad{}\ot r^*(s)r(s)(u^*(s)\rhup e_t)\bowtie p^*(e_a)p(e_b)u^*(s)v(s)\\
    &= \sum_{s,t,a,b\in G} e_s\bowtie ab u^*(t^s)v(t)\ot r^*(s)r(s) e_t \bowtie p^*(e_a)p(e_b)u^*(s)v(s^t).
  \end{align*}
From this we then have
  \begin{align}\label{eq:fact-1}
  \begin{split}
    F(\tau(R)R)(f\# x) &= \sum_{t\in G} r^*(x)r(x)f\com3(u^*(t^x)v(t))e_t\\
    &\qquad\qquad{}\bowtie p(f\com1)p^*(f\com2)u^*(x)v(x^t).
  \end{split}
  \end{align}
Note that $F(\tau(R)R)$ is in general not a morphism of Hopf algebras.

Let $E\subseteq\{1,...,n\}$ be the set associated to $R$ for this decomposition as in the proof of \cref{thm:qts}. Now \cref{def:factor-defs,eq:fact-1,thm:qts} show that
  \begin{align}\label{eq:fact-2}
  \begin{split}
    F&(\tau(R)R)((\pi_E(x)\rhup f)\#x)\\
     &= \sum_{t\in G} r^*(x)r(x)(u+v^*)(f\com 1)e_t\bowtie (p+p^*)(f\com2)u^*(x)v(x^t).
  \end{split}
  \end{align}
Next we consider the special case $f=e_y$ for some $y\in G$.  Then \cref{eq:fact-2} becomes
  \begin{align}\label{eq:fact-3}
  \begin{split}
    F(\tau(R)R)&((\pi_E(x)\rhup e_y)\#x)\\
    &= \sum_{\substack{c,t\in G\\u^*(t)v(t)c=y}} r^*(x)r(x)e_t\bowtie (p+p^*)(e_c)u^*(x)v(x^t).
  \end{split}
  \end{align}
Let $C,D$ be the subgroups of $Z(G)$ determined by $p+p^*$; in particular, $p+p^*$ gives an isomorphism $\widehat{C}\to D$, and in the above summation we must have $c\in C$ for the term to be non-zero.

As we have noted, $u^*\comm v$ and $v$ is normal.  So if $u^*(t)v(t)c=y$ for some $c\in C$ and $t,y\in G$ then
  \begin{align}\label{eq:fact-4}
  \begin{split}
    v(x^t) &= v(x^{v(t)})\\
    &= v(x^{u^*(t)v(t)c})\\
    &= v(x^y).
  \end{split}
  \end{align}
So \cref{eq:fact-4,eq:fact-3,def:factor-defs,prop:fact-morph,thm:qts} combine to give
  \begin{align}\label{eq:fact-5}
  \begin{split}
    F(\tau(R)R)(&(\pi_E(x)\rhup e_y)\#((\pi_{E^c}(y)\rhup x))\\
    &= \sum_{\substack{c,t\in G\\u^*(t)v(t)c=y}} r^*(x)r(x)e_t\bowtie (p+p^*)(e_c)u^*(x)v(x)\\
    &= \Psi_R(e_y\bowtie x).
  \end{split}
  \end{align}
By \cref{lem:twist-basis} we therefore conclude that $F(\tau(R)R)$ and $\Psi_R$ have the same image.  Thus $F(\tau(R)R)$ is bijective if and only if $\Psi_R$ is bijective.

This completes the proof.
\end{proof}

\begin{cor}
  Suppose $G$ is a purely non-abelian group.  Then every quasitriangular structure of $\D(G)$ is factorizable.
\end{cor}
\begin{proof}
  Let $R=\cmorph$ be a quasitriangular structure of $\D(G)$ associated to the set $E$.  By \cref{thm:qts} $u^*+v\in\Aut_c(G_E)$, so by \cref{thm:aut-dg} $\Psi_R\in\Aut(\D(G_E))$ and the result follows from the preceding theorem.
\end{proof}


\bibliographystyle{plainnat}
\bibliography{../references,../refs-quasitriangular}
\end{document}